\ifdefined\pdfoutput
  \pdfoutput=1
\fi
\documentclass[10pt]{article}
\usepackage[T1]{fontenc}
\usepackage{lmodern}
\usepackage[a4paper, margin=1in]{geometry}
\usepackage{amsmath, amssymb, amsthm}
\usepackage[english]{babel}
\usepackage{microtype}
\usepackage[numbers,sort&compress]{natbib}
\usepackage{parskip}
\usepackage[title]{appendix}
\usepackage{xcolor}
\usepackage{hyperref}
\usepackage[nameinlink]{cleveref}
\definecolor{linkblue}{RGB}{25,74,135}
\definecolor{citegreen}{RGB}{35,100,70}

\theoremstyle{plain}
\newtheorem{theorem}{Theorem}[section]
\newtheorem{proposition}[theorem]{Proposition}
\newtheorem{corollary}[theorem]{Corollary}

\crefname{theorem}{Theorem}{Theorems}
\crefname{corollary}{Corollary}{Corollaries}
\crefname{proposition}{Proposition}{Propositions}
\crefname{section}{\S}{Sections}
\crefname{appendix}{Appendix}{Appendices}
\Crefname{appendix}{Appendix}{Appendices}

\AddToHook{env/theorem/begin}{\crefalias{section}{theorem}}
\AddToHook{env/proposition/begin}{\crefalias{theorem}{proposition}}
\AddToHook{env/corollary/begin}{\crefalias{theorem}{corollary}}

\newcommand{\Z}{\mathbb{Z}}
\newcommand{\Ann}{\operatorname{Ann}}
\newcommand{\nilind}{\operatorname{nilind}}
\newcommand{\ordplus}{\operatorname{ord}_+}
\newcommand{\gr}{\operatorname{gr}}
\newcommand{\Wcal}{\mathcal{W}}
\newcommand{\Rint}[1]{R_{#1}}
\newcommand{\Rintab}[2]{R_{#1,#2}}

\title{Prime-Interval Algebras}
\author{Joseph M. Shunia\thanks{Independent researcher, Ann Arbor, Michigan, United States.
Email: \texttt{jshunia@gmail.com}.}}
\date{July 2026}

\hypersetup{
  colorlinks=true,
  linkcolor=linkblue,
  citecolor=citegreen,
  urlcolor=linkblue,
  pdftitle={Prime-Interval Algebras},
  pdfauthor={Joseph M. Shunia},
  pdfsubject={An algebraic encoding of interval primes by modular exponentiation},
  pdfkeywords={prime intervals, central binomial coefficients, quotient rings,
    Frobenius, truncated polynomials, finite rings}
}

\begin{document}
\maketitle

\begin{abstract} \noindent
Starting from a positive integer $n$ and no a priori information about the primes above it, we construct a polynomial quotient ring that recovers exactly the primes in $(n,2n]$ from a single modular exponentiation. The primes occur simultaneously as the nonzero monomial degrees of the resulting polynomial remainder, and each coefficient independently certifies its corresponding prime through its additive order.
\\[2mm]
When $n=p_k$ is prime, the least nonzero degree is $p_{k+1}$. Thus the next prime is recovered from the preceding prime alone, without using the index $k$, the prime-counting function, a prime table, nor any primality tests. We develop the underlying ring structure, give equivalent annihilator and quotient formulations, extend the result to shorter intervals, and provide a SageMath implementation.
\\[2mm]
\textbf{2020 Mathematics Subject Classification:} 11A41 (primary), 11B65, 13E10 (secondary). \\[2mm]
\textbf{Keywords:} prime intervals, central binomial coefficients, quotient rings, Frobenius.
\end{abstract}

\section{Introduction}

This paper was motivated by the following two arithmetic questions:

\begin{enumerate}
\item Can one construct, from a single positive integer $n$, one finite algebraic object that reveals every prime in some explicitly bounded range above $n$, such as $(n,2n]$, without first knowing how many such primes there are or what they are?
\item Given only a known prime value $p_k$, can one recover the next prime $p_{k+1}$ without using the index $k$ or a previously generated list of primes?
\end{enumerate}

The answer to both questions is yes. For each positive integer $n$, define
\begin{equation}
\label{eq:dyadic-modulus}
\Rint{n}
:=
\frac{\binom{2n}{n}}
{\gcd\!\left(\binom{2n}{n},(n!)^2\right)}.
\end{equation}
The key arithmetic fact, proved in \cref{prop:central-binomial-modulus}, is the unexpected identity
\[
\Rint{n}
=
\prod_{\substack{n<p\le2n\\p\ \mathrm{prime}}}p.
\]
Set
\[
B_n=(\mathbb Z/\Rint{n}\mathbb Z)[X]/(X^{2n+1}),
\qquad
f_n=(1+X)^{\Rint{n}}-1\in B_n.
\]
The ring $B_n$ is the algebraic object associated with the dyadic interval $(n,2n]$, and $f_n$ is the distinguished element whose monomial support records its primes. The Frobenius endomorphisms of the residue-field factors give the explicit identity
\[
f_n
=
\sum_{\substack{n<p\le2n\\p\ \mathrm{prime}}}
\frac{\Rint{n}}{p}X^p.
\]
Thus the support exponents are precisely the primes in $(n,2n]$, while the coefficient at $X^p$ has additive order $p$. Bertrand's theorem \cite{HardyWright} ensures that this support is nonempty. Its least exponent is the least prime greater than $n$; when $n=p_k$ is already known to be prime, that exponent is $p_{k+1}$ and $k=\pi(p_k)$ is not part of the construction. The same least prime can also be read from an annihilator threshold. The localized quotient
\[
\mathbb Z[1/n!]\big/\bigl((2n)!/n!\bigr)
\cong
\mathbb Z/\Rint{n}\mathbb Z
\]
explains the arithmetic origin of the modulus.

\subsection{Position among prime formulas, sieves, and polynomial criteria}

The arithmetic starting point is classical: every prime in $(n,2n]$ divides $\binom{2n}{n}$ \cite{Pomerance,HardyWright}, and the corresponding interval product is OEIS A261130 \cite{OEISIntervalProduct}. The present construction turns this divisibility into a two-stage recovery. First, the prime-free $\gcd$ quotient extracts the squarefree interval product without identifying its factors. Frobenius then converts that modulus into one finite-ring element whose support degrees and coefficient orders recover all of those factors simultaneously. This passage from divisibility to explicit prime support is the new step.

This places the construction between interval sieves and prime-representing formulas. Classical sieves recover the whole interval by iteratively eliminating composites; here the small-prime contributions are removed in one arithmetic quotient, after which the polynomial remainder displays all surviving primes at once. A computational comparison appears in \cref{sec:sieve-comparison}. Prime-representing functions in the tradition of Mills and Wright instead produce one prime per argument using specially chosen real constants \cite{Mills,Wright,Matomaki}. Earlier work of Prunescu and Shunia gives fixed-length arithmetic terms for $\pi(m)$, $p(m)$, and extremal prime factors \cite{PrunescuShuniaPrimeTerms,PrunescuShuniaDivisors}. In particular, the latter extracts the greatest prime at most $m$ from $\binom{m}{\lfloor m/2\rfloor}$; at $m=2n$, it selects the largest prime in $(n,2n]$. The construction here retains and separates every prime in that interval, so its output is endpoint-driven and set-valued rather than indexed or scalar-valued.

Frobenius provides a further connection with polynomial congruence criteria. Tests such as AKS apply a Frobenius congruence to one specified candidate \cite{AKS}; here the coefficient ring carries all interval residue characteristics simultaneously, and the single element $f_n$ places each surviving characteristic $q$ at exponent $q$. Thus
\[
n
\longmapsto
(B_n,f_n)
\longmapsto
\{r:[X^r]f_n\ne0\}
=
\{q:n<q\le2n,\ q\text{ prime}\}.
\]
When $n=p_k$, the least support degree is $p_{k+1}$; the annihilator formulation developed below gives the same extraction.

\subsection{Organization of the paper}
\Cref{sec:coefficient-ring} proves the product formula and its localized interpretation. \Cref{sec:frobenius} proves the prime-support expansion, and \cref{sec:annihilator} records its principal annihilator and quotient consequences. \Cref{sec:general-intervals,sec:layered-regrading} develop the algebraic extensions. \Cref{sec:computation} discusses direct evaluation and computational costs, followed by worked computations in \cref{sec:worked-examples}. The appendices give the modular factorial evaluation and SageMath code.

\section{The coefficient ring}
\label{sec:coefficient-ring}

We first show that the integer in \eqref{eq:dyadic-modulus} is precisely the squarefree product of the primes in $(n,2n]$.

\begin{proposition}[Central-binomial formula for the dyadic modulus]
\label{prop:central-binomial-modulus}
For every $n\ge1$,
\begin{equation}
\Rint{n}
=
\frac{\binom{2n}{n}}
{\gcd\!\left(\binom{2n}{n},(n!)^2\right)}
=
\prod_{\substack{n<p\le2n\\p\text{ prime}}}p.
\end{equation}
\end{proposition}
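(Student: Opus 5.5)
The identity will be proved prime by prime, comparing $p$-adic valuations of both sides. Write $C=\binom{2n}{n}$ and $D=(n!)^2$. For each prime $p$ we have $v_p(\gcd(C,D))=\min(v_p(C),v_p(D))$, and hence
\[
v_p(\Rint{n}) = v_p(C)-\min\bigl(v_p(C),v_p(D)\bigr)=\max\bigl(0,\,v_p(C)-v_p(D)\bigr).
\]
It therefore suffices to show that this quantity equals $1$ when $n<p\le 2n$ and equals $0$ for every other prime $p$.

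We split according to the size of $p$. If $p>2n$, then $p$ divides neither $(2n)!$ nor $n!$, so $v_p(C)=0$ and the valuation is $0$.

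If $n<p\le 2n$, then $p\nmid n!$, so $v_p(D)=0$. On the other hand $v_p((2n)!)=1$, because $2p>2n$ and $p^2>2n$ (as $p^2\ge 2p>2n$ once $p\ge 2$). Hence $v_p(C)=1$ and $v_p(\Rint{n})=1$.

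If $p\le n$, we must show $v_p(C)\le v_p(D)=2v_p(n!)$. Kummer's theorem, or equivalently Legendre's formula, gives
\[
v_p(C)=\sum_{i\ge1}\Bigl(\lfloor 2n/p^i\rfloor-2\lfloor n/p^i\rfloor\Bigr),
\]
in which each summand lies in $\{0,1\}$ and vanishes once $p^i>2n$. So $v_p(C)\le \lfloor\log_p(2n)\rfloor$, which is the classical bound $p^{v_p(C)}\le 2n$. Legendre's formula also gives
\[
2v_p(n!)=2\sum_{i\ge1}\lfloor n/p^i\rfloor\ge 2\lfloor n/p\rfloor\ge 2.
\]
The remaining task is to compare these two bounds.

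The comparison of these bounds for $p\le n$ is the main obstacle, and I would do it termwise. For each $i$ with $p^i\le n$ we have $\lfloor n/p^i\rfloor\ge1$, so such an index contributes at most $1$ to $v_p(C)$ and at least $2$ to $2v_p(n!)$. The only indices not yet covered satisfy $n<p^i\le 2n$, and at most one such $i$ exists because consecutive powers of $p$ differ by a factor of at least $2$. That single extra term contributes at most $1$ to $v_p(C)$.

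So let $m$ be the number of $i$ with $p^i\le n$; here $m\ge1$ since $p\le n$. Then
\[
v_p(C)\le m+1\le 2m\le 2v_p(n!),
\]
and therefore $v_p(\Rint{n})=0$. Combining the three cases, $\Rint{n}$ is exactly the product of the primes $p$ with $n<p\le 2n$.
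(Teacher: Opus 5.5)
Your proof is correct and takes essentially the same route as the paper: Legendre's formula with $\{0,1\}$-valued summands, and at most one index with $n<p^i\le 2n$. This gives $v_p(C)\le v_p(n!)+1\le 2v_p(n!)$ for $p\le n$ (your count $m$ of indices with $p^i\le n$ is only a slightly sharper intermediate bound), together with $v_p(C)=1$ for $n<p\le 2n$. Your explicit valuation formula $v_p(R_n)=\max\bigl(0,v_p(C)-v_p(D)\bigr)$ and your treatment of the case $p>2n$ spell out steps the paper leaves implicit.
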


\begin{proof}
Write $C=\binom{2n}{n}$. If $p$ is prime and $n<p\le2n$, then $p$ occurs once in $(2n)!$ and not at all in $n!$, so
\[
v_p(C)=1.
\]

Now let $\ell\le n$ be prime. Legendre's formula \cite{HardyWright} gives
\[
v_\ell(C)
=
\sum_{j\ge1}
\left(
\left\lfloor\frac{2n}{\ell^j}\right\rfloor
-
2\left\lfloor\frac{n}{\ell^j}\right\rfloor
\right).
\]
Each summand is either $0$ or $1$. For $\ell^j\le n$, it is at most $\lfloor n/\ell^j\rfloor$, and there is at most one index $j$ with $n<\ell^j\le2n$. Hence
\[
v_\ell(C)\le v_\ell(n!)+1.
\]
For $n\ge2$, one has $v_\ell(n!)\ge1$, and therefore
\[
v_\ell(C)\le2v_\ell(n!).
\]
Thus the $\gcd$ with $(n!)^2$ removes the full primary factor of $C$ at every prime at most $n$, while each prime in $(n,2n]$ survives once. The case $n=1$ is immediate.
\end{proof}

\begin{corollary}[Residue-field decomposition]
The Chinese remainder theorem gives a canonical isomorphism
\[
\mathbb Z/\Rint{n}\mathbb Z
\cong
\prod_{\substack{n<p\le2n\\p\text{ prime}}}\mathbb F_p.
\]
In particular, the coefficient ring is finite, reduced, and Artinian, with cardinality and characteristic both equal to $\Rint{n}$.
\end{corollary}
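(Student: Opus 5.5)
The corollary follows directly from \cref{prop:central-binomial-modulus} and the Chinese remainder theorem.

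\textbf{Step 1 (factorization).} By \cref{prop:central-binomial-modulus}, $\Rint{n}=\prod_{n<p\le2n}p$ is a product of distinct primes. The factors are therefore pairwise coprime, so the ideals $p\mathbb Z$ for $n<p\le 2n$ are pairwise comaximal.

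\textbf{Step 2 (the isomorphism).} The Chinese remainder theorem then shows that the reduction map
\[
\mathbb Z/\Rint{n}\mathbb Z\longrightarrow\prod_{n<p\le2n}\mathbb Z/p\mathbb Z,\qquad a\bmod \Rint{n}\longmapsto (a\bmod p)_p,
\]
is a ring isomorphism. It is canonical because every component is the natural projection. Each factor is $\mathbb Z/p\mathbb Z=\mathbb F_p$.

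\textbf{Step 3 (the stated properties).}
\begin{itemize}
\item \emph{Finite, with the right cardinality and characteristic.} $\mathbb Z/\Rint{n}\mathbb Z$ has $\Rint{n}$ elements, and the additive order of $1$ is $\Rint{n}$.
\item \emph{Reduced.} A finite product of fields has no nonzero nilpotents, since nilpotence is checked componentwise. Equivalently, $\Rint{n}$ is squarefree.
\item \emph{Artinian.} Every finite ring is Artinian. Alternatively, a finite product of fields is Artinian.
\end{itemize}

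\textbf{Degenerate cases.} For $n\ge1$, Bertrand's postulate makes the product nonempty. If one did not invoke it, an empty product would give $\Rint{n}=1$, the zero ring, and the statements would still hold trivially. The case $n=1$ gives $\Rint{1}=2$ and $\mathbb F_2$.

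There is no real obstacle here. The only point needing care is that squarefreeness, and hence both reducedness and the field decomposition, comes from the exact exponent $1$ established in the proposition.
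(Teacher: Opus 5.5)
Your proof is correct and follows the paper's route. The paper gives no separate proof: it treats the corollary as an immediate consequence of the squarefree factorization $\Rint{n}=\prod_{n<p\le2n}p$ from \cref{prop:central-binomial-modulus} together with the Chinese remainder theorem, and your checks of finiteness, reducedness, the Artinian property, cardinality, and characteristic are the routine details it leaves implicit.
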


\begin{proposition}[Localized provenance]
There is a canonical isomorphism
\[
\mathbb Z[1/n!]\big/\bigl((2n)!/n!\bigr)
\cong
\mathbb Z/\Rint{n}\mathbb Z.
\]
\end{proposition}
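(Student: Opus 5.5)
The plan is to compute the quotient one prime at a time. Write $M=(2n)!/n!=\prod_{k=n+1}^{2n}k$, so $M$ is a positive integer. In the localization $\mathbb Z[1/n!]$, every prime $\ell\le n$ is a unit, since each such $\ell$ divides $n!$. Every prime $p>n$ stays a non-unit, and $\mathbb Z[1/n!]/p\mathbb Z[1/n!]\cong\mathbb F_p$. I would therefore factor $M=u\cdot M'$, where $u$ is the $\{\ell\le n\}$-part of $M$ and $M'$ is coprime to $n!$. Then $u$ is a unit in $\mathbb Z[1/n!]$, and the principal ideals satisfy $(M)=(M')$.

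Next I identify $M'$. Every prime factor of $M$ is at most $2n$. A prime $p$ with $n<p\le2n$ divides exactly one of the factors $n+1,\dots,2n$, namely $p$ itself. It divides it exactly once, because $p^2>2n$ when $p>n\ge1$; the only exception would be $p^2\le 2n$, which would force $p\le\sqrt{2n}\le n$ for $n\ge2$. The case $n=1$ is checked directly: $M=2$ and $R_1=2$. Hence $v_p(M)=1$ for these primes, and $M'=\prod_{n<p\le2n}p$. This product equals $\Rint{n}$ by \cref{prop:central-binomial-modulus}. Alternatively, one can avoid that proposition by noting $M=\binom{2n}{n}\,n!$, which has the same $p$-adic valuations as $\binom{2n}{n}$ at every $p>n$.

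The remaining step is to show $\mathbb Z[1/n!]/(R)\cong\mathbb Z/R\mathbb Z$ whenever $\gcd(R,n!)=1$, taking $R=\Rint{n}$. The canonical map $\mathbb Z\to\mathbb Z[1/n!]/(R)$ has kernel $\mathbb Z\cap R\,\mathbb Z[1/n!]$. If $a=R\,b/(n!)^k$ with $a,b\in\mathbb Z$, then $(n!)^k a=Rb$, and coprimality gives $R\mid a$; so the kernel is $R\mathbb Z$. The map is surjective because $n!$ is invertible modulo $R$: for $b/(n!)^k$, choose an integer $c$ with $c\,(n!)^k\equiv b\pmod R$. Then $b/(n!)^k-c=(b-c(n!)^k)/(n!)^k$ lies in $R\,\mathbb Z[1/n!]$. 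This yields the canonical isomorphism induced by $\mathbb Z\to\mathbb Z[1/n!]$.

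No step is a real obstacle. The main care points are the multiplicity-one check $v_p(M)=1$, which uses $p^2>2n$, together with the small case $n=1$, and making sure the isomorphism is the canonical one induced by the inclusion $\mathbb Z\hookrightarrow\mathbb Z[1/n!]$.
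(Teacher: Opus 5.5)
Your proof is correct and is essentially the paper's argument. You factor $(2n)!/n!$ as a unit of $\mathbb Z[1/n!]$ times $\Rint{n}$, using \cref{prop:central-binomial-modulus} and $v_p((2n)!/n!)=1$ for $n<p\le 2n$. You then check $\mathbb Z[1/n!]/(R)\cong\mathbb Z/R\mathbb Z$ for $\gcd(R,n!)=1$ explicitly via kernel and surjectivity, where the paper just says localization changes nothing. The real reason for $v_p=1$ is $2p>2n$, so your $p^2>2n$ detour and the separate $n=1$ check are unnecessary.

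One caution concerns your ``alternatively'' remark: it does not actually bypass \cref{prop:central-binomial-modulus}. Matching valuations at primes $p>n$ says nothing about primes $\ell\le n$. You still need $v_\ell\bigl(\binom{2n}{n}\bigr)\le 2v_\ell(n!)$ to know that the $\gcd$ removes those primes completely and that $\Rint{n}$ is coprime to $n!$.
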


\begin{proof}
Every composite integer $m$ satisfying $n<m\le2n$ has all prime divisors at most $n$: a prime divisor larger than $n$ would force $m$ to exceed $2n$. These small prime factors become units in $\mathbb Z[1/n!]$. Every prime $p\in(n,2n]$ occurs exactly once in $(2n)!/n!$, because $2p>2n$. Hence
\[
\frac{(2n)!}{n!}=u\Rint{n}
\]
for a unit $u\in\mathbb Z[1/n!]^\times$, so the two elements generate the same ideal after localization. Since $\gcd(n!,\Rint{n})=1$, localizing $\mathbb Z/\Rint{n}\mathbb Z$ at $n!$ changes nothing.
\end{proof}

The direct presentation $\mathbb Z/\Rint{n}\mathbb Z$ drives the construction below, while the localized presentation explains why prime factors at most $n$ disappear and the primes in $(n,2n]$ remain.

\section{Frobenius and monomial support}
\label{sec:frobenius}

Fix $n\ge1$. Throughout this section, write
\[
A=\mathbb Z/\Rint{n}\mathbb Z,
\qquad
B=A[X]/(X^{2n+1}),
\qquad
f=(1+X)^{\Rint{n}}-1\in B.
\]
Every polynomial class in $B$ is represented uniquely by a polynomial of degree at most $2n$; coefficient extraction and support refer to that representative. Integers written as coefficients denote their residue classes in $A$. For an element $a$ of finite additive order, let
\[
\ordplus(a)=\min\{d\ge1:da=0\}.
\]
For a prime $p\in(n,2n]$, put $m_p=\Rint{n}/p$. By \cref{prop:central-binomial-modulus}, $p\nmid m_p$.

\begin{theorem}[Fiberwise Frobenius monomial]
\label{thm:frobenius-monomial}
Under the decomposition
\[
B\cong
\prod_{\substack{n<p\le2n\\p\text{ prime}}}\mathbb F_p[X]/(X^{2n+1}),
\]
the characteristic-$p$ component of $f$ is
\[
\left(\frac{\Rint{n}}{p}\bmod p\right)X^p.
\]
\end{theorem}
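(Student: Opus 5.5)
The plan is to first set up the decomposition of $B$ and then compute $f$ one factor at a time. By the residue-field decomposition corollary, $A\cong\prod_p\mathbb F_p$ over primes $p\in(n,2n]$. Since $(X^{2n+1})$ is extended from $\mathbb Z$, tensoring this isomorphism with $\mathbb Z[X]/(X^{2n+1})$ gives
\[
B\cong\prod_{\substack{n<p\le2n\\p\text{ prime}}}\mathbb F_p[X]/(X^{2n+1}).
\]
Equivalently, reducing coefficients mod each $p$ is a ring homomorphism $B\to\mathbb F_p[X]/(X^{2n+1})$, and together these maps form the isomorphism. Because they are ring homomorphisms, the characteristic-$p$ component of $f$ is just $(1+X)^{\Rint{n}}-1$ computed in $\mathbb F_p[X]/(X^{2n+1})$.

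Next I will compute in one fiber. Write $\Rint{n}=p\,m_p$. In characteristic $p$, the Frobenius identity gives $(1+X)^p=1+X^p$ in $\mathbb F_p[X]$, hence also in the quotient. Therefore
\[
(1+X)^{\Rint{n}}=(1+X^p)^{m_p}=\sum_{j=0}^{m_p}\binom{m_p}{j}X^{jp}.
\]
Since $p>n$, we have $2p\ge 2n+2>2n+1$, so every term with $j\ge2$ vanishes modulo $X^{2n+1}$. What remains is $1+m_pX^p$. Note also that $p\le 2n$, so $X^p$ itself survives. Subtracting $1$ leaves $(m_p\bmod p)X^p$, which is the claim. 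The fact that $p\nmid m_p$, from \cref{prop:central-binomial-modulus}, is not needed for the identity itself; it only shows that this monomial is nonzero.

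The only step needing any care is the first: checking that the CRT decomposition of $A$ lifts to $B$ compatibly with the truncation. I would handle it by observing that $A[X]/(X^{2n+1})\cong A\otimes_{\mathbb Z}\mathbb Z[X]/(X^{2n+1})$, and that tensor products commute with finite products. The remaining computation is routine, and its key point is the degree bound $2p>2n$.
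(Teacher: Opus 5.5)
Your proposal is correct and follows the paper's proof essentially verbatim: Frobenius gives $(1+X)^{\Rint{n}}=(1+X^p)^{m_p}$, and the bound $2p\ge 2n+2$ removes every term beyond $m_pX^p$. Your justification of the decomposition of $B$ via $A\otimes_{\mathbb Z}\mathbb Z[X]/(X^{2n+1})$, which the paper takes for granted, is a correct and harmless addition.
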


\begin{proof}
In characteristic $p$, Frobenius gives $(1+X)^p=1+X^p$. Since $\Rint{n}=pm_p$,
\[
(1+X)^{\Rint{n}}-1
=(1+X^p)^{m_p}-1
=\sum_{r=1}^{m_p}\binom{m_p}{r}X^{pr}.
\]
Because $p>n$, every term with $r\ge2$ has degree at least $2p\ge2n+2$ and therefore vanishes modulo $X^{2n+1}$. The surviving coefficient is $m_p\bmod p$, which is nonzero.
\end{proof}

\begin{theorem}[Prime-support expansion]
\label{thm:prime-support}
In $B$,
\[
f
=
\sum_{\substack{n<p\le2n\\p\text{ prime}}}
\frac{\Rint{n}}{p}X^p.
\]
For each interval prime $p$,
\[
\ordplus\!\left(\frac{\Rint{n}}{p}\right)=p,
\qquad
\left|A\cdot\frac{\Rint{n}}{p}\right|=p,
\qquad
\Ann_{\Z}\!\left(\frac{\Rint{n}}{p}\right)=p\Z.
\]
Consequently,
\[
\{0\le r\le2n:[X^r]f\ne0\}
=
\{p:n<p\le2n,\ p\text{ prime}\},
\]
and $f$ has $\pi(2n)-\pi(n)$ nonzero coefficients.
\end{theorem}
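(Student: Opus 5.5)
The plan is to assemble the global statement from the fiberwise description in \cref{thm:frobenius-monomial} via the Chinese remainder decomposition. Write $g=\sum_{n<q\le2n}\frac{\Rint{n}}{q}X^q\in B$, where $q$ ranges over primes. I would show that $f$ and $g$ have the same image in every factor $\mathbb F_p[X]/(X^{2n+1})$, for $p$ prime in $(n,2n]$. Since the map $B\to\prod_p\mathbb F_p[X]/(X^{2n+1})$ is an isomorphism (coefficientwise CRT, using the residue-field decomposition corollary), this forces $f=g$.

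To compare the components, take a prime $q\ne p$ in the interval. Then $p\mid \Rint{n}/q$ by \cref{prop:central-binomial-modulus}, since $\Rint{n}$ is the squarefree product of the interval primes. So the term $\frac{\Rint{n}}{q}X^q$ vanishes in characteristic $p$, and the image of $g$ is $(\Rint{n}/p\bmod p)X^p$. This matches \cref{thm:frobenius-monomial}. The exponents $q$ are distinct and lie in $[0,2n]$, so $g$ is already the canonical representative. Its coefficient at $X^r$ is $\Rint{n}/r$ when $r$ is an interval prime and $0$ otherwise.

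Next come the order statements for $a=m_p=\Rint{n}/p\in A=\mathbb Z/\Rint{n}\mathbb Z$.
\begin{itemize}
\item $da=0$ in $A$ if and only if $pm_p\mid dm_p$, that is, if and only if $p\mid d$. So $\Ann_{\Z}(a)=p\Z$ and $\ordplus(a)=p$.
\item The ideal $A\cdot a$ is the image of $m_p\Z$, namely $m_p\Z/\Rint{n}\Z$, which is cyclic of order $\Rint{n}/m_p=p$.
\end{itemize}
In particular $a\ne0$ in $A$ because $p>1$, so every interval prime lies in the support. Combined with the vanishing of all other coefficients, this gives the stated support equality. Counting the support then yields $\pi(2n)-\pi(n)$ nonzero coefficients.

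The only point needing care is the well-definedness of support under the identification. Support must be read from the unique degree-$\le2n$ representative, and equality in $B$ must be detected componentwise. Both follow because the CRT isomorphism acts coefficientwise on $A[X]/(X^{2n+1})$, so this step is mainly bookkeeping rather than a real obstacle.
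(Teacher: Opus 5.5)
Your proposal is correct and follows essentially the same route as the paper: you compare CRT components of $f$ with those of the candidate sum, using \cref{thm:frobenius-monomial} and the squarefreeness of $\Rint{n}$, and then compute the additive order of $\Rint{n}/p$ modulo $\Rint{n}$ directly. Your derivations of the ideal-size and $\mathbb{Z}$-annihilator statements are slightly more explicit than the paper's, which simply notes that they are equivalent forms of the order computation.
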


\begin{proof}
The integer $\Rint{n}/p$ is divisible by every interval prime other than $p$ and is nonzero modulo $p$. Its CRT components therefore agree with \cref{thm:frobenius-monomial}. Its additive order modulo $\Rint{n}$ is
\[
\frac{\Rint{n}}
{\gcd\!\left(\Rint{n},\Rint{n}/p\right)}=p.
\]
The ideal and integer-annihilator statements are equivalent componentwise forms of the same fact. The support assertion follows by inspecting the displayed expansion.
\end{proof}

\begin{corollary}[Least prime above the input]
The least support exponent of $f$ is the least prime strictly greater than $n$. If $n=p_k$ is a known prime, then
\[
\min\{r:[X^r]f\ne0\}=p_{k+1}.
\]
\end{corollary}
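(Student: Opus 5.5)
The corollary follows from \cref{thm:prime-support} combined with Bertrand's postulate, so the plan is to read off the minimum of the support set. By \cref{thm:prime-support}, the support of $f$ in $B$ is
\[
S_n:=\{0\le r\le2n:[X^r]f\ne0\}=\{p:n<p\le2n,\ p\text{ prime}\}.
\]
The first step is to make sure the minimum is well defined, that is, $S_n\neq\emptyset$. Bertrand's theorem \cite{HardyWright} supplies a prime $q$ with $n<q\le 2n$ for every $n\ge1$, and the case $n=1$ is covered by $q=2$. Hence $S_n$ is a nonempty finite set of integers and has a least element.

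The second step identifies that least element. Let $q^\ast$ be the least prime strictly greater than $n$; it exists because there are infinitely many primes. By Bertrand, some prime lies in $(n,2n]$, and $q^\ast$ is at most that prime, so $q^\ast\le 2n$ and therefore $q^\ast\in S_n$. Every element of $S_n$ is a prime exceeding $n$, so each is at least $q^\ast$. It follows that $\min S_n=q^\ast$.

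The third step specializes to $n=p_k$. The least prime strictly greater than $p_k$ is by definition $p_{k+1}$, which gives $\min\{r:[X^r]f\ne0\}=p_{k+1}$. The index $k$ only names the output and plays no role in forming $B$ or $f$. There is no genuine obstacle here. The one point needing care is that the minimum over the support of the unique degree-$\le 2n$ representative is taken over $S_n$ as a set of integers, and that Bertrand guarantees both nonemptiness and that $q^\ast$ does not exceed the truncation degree $2n$.
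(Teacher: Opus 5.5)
Your proposal is correct and matches the paper's proof: Bertrand's theorem puts the least prime above $n$ inside $(n,2n]$, so it lies in the support identified by \cref{thm:prime-support} and is the minimum of that support. Your version only writes out the nonemptiness and minimality steps that the paper's one-line proof leaves implicit.
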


\begin{proof}
By Bertrand's theorem \cite{HardyWright}, some prime lies in $(n,2n]$. The claim therefore follows from \cref{thm:prime-support}.
\end{proof}

\section{Structural consequences}
\label{sec:annihilator}

Fix $n\ge1$ and, for this section, write
\[
N=2n+1,
\qquad
A=\mathbb Z/\Rint{n}\mathbb Z,
\qquad
B=A[X]/(X^N),
\qquad
f=(1+X)^{\Rint{n}}-1\in B.
\]
The support theorem already identifies the interval primes. The filtration below records the same data intrinsically as a chain of ideals.

For $0\le t\le N$, set
\[
I_t=\Ann_A(X^tf)
=\{\alpha\in A:\alpha X^tf=0\text{ in }B\}.
\]
These ideals form an increasing chain
\[
0=I_0\subseteq I_1\subseteq\cdots\subseteq I_N=A.
\]

\begin{theorem}[Annihilator filtration]
Under $A\cong\prod_{\substack{n<p\le2n\\p\text{ prime}}}\mathbb F_p$, the ideal $I_t$ is
\[
I_t
\cong
\prod_{\substack{n<p\le2n\\p\text{ prime},\ p+t\ge N}}\mathbb F_p,
\]
with zero in the remaining components. Hence, as $A$-modules,
\[
I_t/I_{t-1}
\cong
\begin{cases}
\mathbb F_{N-t},&N-t\text{ is prime and }n<N-t\le2n,\\
0,&\text{otherwise}.
\end{cases}
\]
\end{theorem}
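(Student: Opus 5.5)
Work componentwise, using the CRT decomposition of $A$ and of $B$ from \cref{thm:frobenius-monomial}. Every ideal of $A\cong\prod_p\mathbb F_p$ is a product of some components, each either $0$ or $\mathbb F_p$. Moreover $\alpha X^tf=0$ in $B$ exactly when it vanishes in every factor $\mathbb F_p[X]/(X^N)$. So it suffices to fix one interval prime $p$ and decide when the $p$-component of $X^tf$ is zero.

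By \cref{thm:frobenius-monomial}, that component is $c_pX^{p+t}$ with $c_p=(\Rint{n}/p)\bmod p\ne0$. In $\mathbb F_p[X]/(X^N)$, the monomial $X^{p+t}$ is zero precisely when $p+t\ge N$. Hence there are two cases.
\begin{itemize}
\item If $p+t\ge N$, every $\alpha_p\in\mathbb F_p$ kills the component, so the $p$-component of $I_t$ is all of $\mathbb F_p$.
\item If $p+t<N$, then $\alpha_pc_pX^{p+t}=0$ forces $\alpha_pc_p=0$. Since $c_p$ is a unit in the field $\mathbb F_p$, this gives $\alpha_p=0$.
\end{itemize}
Assembling the components yields the stated description of $I_t$. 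The endpoints check out: $I_0=0$ because every $p\le2n<N$, and $I_N=A$ because $p+N\ge N$. The chain is increasing because the condition $p+t\ge N$ is monotone in $t$.

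For the successive quotients, compare $I_t$ and $I_{t-1}$ component by component. They differ exactly in those components $\mathbb F_p$ with $p+t\ge N$ but $p+t-1<N$, that is, with $p=N-t$. Therefore $I_t/I_{t-1}\cong\mathbb F_{N-t}$ when $N-t$ is an interval prime, and the quotient is $0$ otherwise. This is an isomorphism of $A$-modules, with $A$ acting on $\mathbb F_{N-t}$ through the projection.

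No step is a real obstacle. The one point needing care is to apply \cref{thm:frobenius-monomial} in the truncated ring $\mathbb F_p[X]/(X^N)$ rather than in the full polynomial ring, so that the vanishing criterion for $X^{p+t}$ is exactly $p+t\ge N$. The other point to record explicitly is that $c_p\ne0$, which is where $p\nmid m_p$ is used.
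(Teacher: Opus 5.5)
Your proposal is correct and takes essentially the same approach as the paper's proof. Both reduce to the characteristic-$p$ component, where $X^tf$ is the unit multiple $c_pX^{p+t}$; this vanishes exactly when $p+t\ge N$, so the quotient $I_t/I_{t-1}$ picks up the new component at $p=N-t$, and your endpoint checks $I_0=0$, $I_N=A$ and the remark on the $A$-module structure make explicit what the paper leaves implicit.
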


\begin{proof}
In the characteristic-$p$ component, $X^tf$ is a unit multiple of $X^{p+t}$. If $p+t\ge N$, this element is zero, so every scalar annihilates it. If $p+t<N$, it is nonzero, and its scalar annihilator in the field is zero. A new component appears between $t-1$ and $t$ precisely when $p+t=N$.
\end{proof}

\begin{corollary}[Associated graded pieces]
The associated graded $A$-module
\[
\gr_I(A)=\bigoplus_{t=1}^{N}I_t/I_{t-1}
\]
has a nonzero degree-$t$ piece exactly when $N-t$ is prime in $(n,2n]$. Every nonzero graded piece has cardinality $N-t$.
\end{corollary}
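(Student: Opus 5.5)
The corollary follows directly from the Annihilator filtration theorem, so I would only unpack that theorem degree by degree. The plan is to read off the $t$-th graded piece $I_t/I_{t-1}$ from the case description already established there, and then count its elements using the structure of $\mathbb F_p$.

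First, fix $1\le t\le N$. The theorem gives an isomorphism of $A$-modules from $I_t/I_{t-1}$ either to $\mathbb F_{N-t}$ or to $0$. The first case occurs exactly when $N-t$ is a prime in $(n,2n]$, and the second case otherwise. So the degree-$t$ piece of $\gr_I(A)$ is nonzero precisely in the stated situation. This also matches the componentwise description of the filtration: the component indexed by $p$ first enters $I_t$ at $t=N-p$. Since $p\le 2n<N$, we have $t\ge1$. Since $p>n\ge1$, we have $t\le N-2$. Hence $t=N-p$ lies inside the indexing range $1\le t\le N$. Different interval primes give different values of $t$, so no graded piece receives two fields.

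Second, for the cardinality: when $p=N-t$ is an interval prime, the piece is isomorphic to $\mathbb F_p$ and so has exactly $p=N-t$ elements. This is the same count as $|A\cdot \Rint{n}/p|=p$ in the prime-support theorem (\cref{thm:prime-support}), which gives a consistency check.

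There is no real obstacle. The only care needed is bookkeeping, in two places. One is confirming that every interval prime gives an index $t$ in the summation range $1\le t\le N$. The other is noting that the $A$-module isomorphism preserves cardinality, so the count of $N-t$ elements transfers from $\mathbb F_{N-t}$ to $I_t/I_{t-1}$.
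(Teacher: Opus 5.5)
Your proposal is correct and matches the paper, which gives no separate proof and treats this corollary as an immediate reading of the Annihilator filtration theorem's case description $I_t/I_{t-1}\cong\mathbb F_{N-t}$ or $0$. Your extra bookkeeping is accurate but already implicit in that theorem: $t=N-p$ lies in $1\le t\le N$, distinct primes give distinct degrees, and cardinality transfers along the isomorphism.
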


If $t_1<\cdots<t_k$ are the degrees for which $I_t/I_{t-1}\ne0$, then
\[
\{p:n<p\le2n,\ p\text{ prime}\}
=\{N-t_k,\ldots,N-t_1\},
\qquad
\pi(2n)-\pi(n)=k.
\]
Since the displayed primes are in decreasing order, the differences
$t_{i+1}-t_i$ are the corresponding prime gaps read in reverse prime order.

\begin{corollary}[Least-prime extraction by annihilation]
Let $q(n)$ be the least prime greater than $n$, and put
\[
\lambda_n=\min\{t\ge0:X^tf=0\text{ in }B\}.
\]
Then
\[
q(n)=2n+1-\lambda_n.
\]
In particular, if $n=p_k$ is a known prime, then $q(n)=p_{k+1}$.
\end{corollary}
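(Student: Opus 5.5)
The plan is to read $\lambda_n$ directly off the prime-support expansion in \cref{thm:prime-support}, working componentwise as in the proof of the annihilator filtration. By that theorem,
\[
f=\sum_{\substack{n<p\le2n\\p\text{ prime}}}\frac{\Rint{n}}{p}X^p,
\]
and every coefficient $\Rint{n}/p$ is nonzero in $A$. Hence for $t\ge0$,
\[
X^tf=\sum_{\substack{n<p\le2n\\p\text{ prime}}}\frac{\Rint{n}}{p}X^{p+t}\quad\text{in }B.
\]
The exponents $p+t$ are distinct, so there is no cancellation between terms. A term survives in $B=A[X]/(X^{N})$ exactly when $p+t\le N-1=2n$. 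Therefore $X^tf=0$ if and only if $p+t\ge 2n+1$ for every interval prime $p$. Equivalently, $X^tf=0$ if and only if $t\ge 2n+1-q(n)$, where $q(n)$ is the smallest prime in $(n,2n]$.

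There is an alternative phrasing in terms of the annihilator filtration. The condition $X^tf=0$ is the same as $1\in I_t$, which means $I_t=A$. By the theorem on the annihilator filtration, this holds exactly when every component satisfies $p+t\ge N$.

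The next step is to identify the least interval prime with $q(n)$, and this is where Bertrand's theorem enters. Bertrand guarantees that the set of primes in $(n,2n]$ is nonempty. Hence its minimum is the least prime exceeding $n$, as in the corollary on the least prime above the input. This nonemptiness matters for the formula: if the set were empty, we would get $f=0$ and $\lambda_n=0$, and the identity would fail. It also ensures $\lambda_n=2n+1-q(n)\ge 1$, consistent with the definition as a minimum over $t\ge0$.

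From the equivalence above, the set $\{t\ge 0: X^tf=0\}$ is exactly $\{t: t\ge 2n+1-q(n)\}$. Its minimum is therefore $2n+1-q(n)$, which rearranges to $q(n)=2n+1-\lambda_n$. The special case $n=p_k$ is immediate, since the least prime greater than $p_k$ is $p_{k+1}$ by definition.

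There is no real obstacle in this argument. The only points that need care are the absence of cancellation, which follows from the distinct exponents together with the nonzero coefficients from \cref{thm:prime-support}, and the appeal to Bertrand's theorem to rule out an empty support.
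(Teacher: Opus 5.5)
Your proposal is correct and follows essentially the same route as the paper: you read $X^tf$ off the prime-support expansion, observe that it vanishes exactly when $t+p\ge 2n+1$ for every interval prime $p$, and invoke Bertrand's theorem so that the binding constraint comes from $q(n)$. Your explicit remarks on the absence of cancellation (distinct exponents, nonzero coefficients) and on why nonemptiness of the prime set is needed spell out details that the paper's two-line proof leaves implicit.
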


\begin{proof}
By Bertrand's theorem, $q(n)\le2n$. A monomial $X^t$ annihilates $f$ exactly when $t+r\ge2n+1$ for every prime $r\in(n,2n]$. The strongest condition comes from the least such prime, namely $q(n)$.
\end{proof}

\subsection{Annihilators, quotients, and Jordan lengths}

For a nilpotent element $x$ of a ring $S$, use the convention
\[
\nilind_S(x)=\min\{d\ge1:x^d=0\}.
\]
The interval primes also occur as annihilator exponents, vector-space dimensions,
nilpotency indices, and Jordan-block lengths.

\begin{proposition}[Full annihilator]
In the characteristic-$p$ component of $B$,
\[
\Ann_{\mathbb F_p[X]/(X^N)}(f)=(X^{N-p}).
\]
Therefore
\[
\Ann_B(f)
\cong
\prod_{\substack{n<p\le2n\\p\text{ prime}}}(X^{N-p}).
\]
\end{proposition}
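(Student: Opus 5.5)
The plan is to work one residue characteristic at a time, using the CRT decomposition of $B$ from \cref{thm:frobenius-monomial}. Annihilators respect finite products of rings: if $B\cong\prod_p B_p$ and $f$ corresponds to $(f_p)_p$, then $\Ann_B(f)\cong\prod_p\Ann_{B_p}(f_p)$. This holds because multiplication is computed componentwise, and an element of the product is zero exactly when every component is zero. So the second display follows from the first, and the whole task reduces to computing $\Ann_{\mathbb F_p[X]/(X^N)}(f_p)$ for a single interval prime $p$.

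In that component, \cref{thm:frobenius-monomial} gives $f_p=c\,X^p$ with $c=\Rint{n}/p\bmod p$. This $c$ is a nonzero element of $\mathbb F_p$, hence a unit. Its annihilator therefore equals $\Ann(X^p)$ in $\mathbb F_p[X]/(X^N)$, where $N=2n+1>p$.

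To compute $\Ann(X^p)$, take a class $g=\sum_{j=0}^{N-1}a_jX^j$. Then
\[
gX^p=\sum_{j=0}^{N-1-p}a_jX^{j+p}
\]
modulo $X^N$, and this vanishes exactly when $a_j=0$ for all $j<N-p$. That is the same as saying $g$ lies in the ideal $(X^{N-p})$. The reverse inclusion is immediate, since $X^{N-p}X^p=X^N=0$. Note that $N-p\ge1$ because $p\le 2n<N$, so the ideal is proper, as it should be: $f_p\neq0$.

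The only step needing care is the bookkeeping in the product decomposition. Each factor $(X^{N-p})$ lives in its own component ring $\mathbb F_p[X]/(X^N)$, and the isomorphism is the CRT isomorphism of \cref{thm:frobenius-monomial} restricted to ideals. Beyond that, the argument is routine.
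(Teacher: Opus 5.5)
Your proof is correct and follows essentially the same route as the paper: in the characteristic-$p$ component, $f$ is a unit multiple of $X^p$, and $gX^p=0$ modulo $X^N$ exactly when $X^{N-p}$ divides $g$. You additionally spell out that annihilators split componentwise under the CRT product and give the explicit coefficient check, both of which the paper leaves implicit.
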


\begin{proof}
The characteristic-$p$ component of $f$ is a unit multiple of $X^p$. A class $g(X)$ satisfies $g(X)X^p=0$ modulo $X^N$ exactly when it is divisible by $X^{N-p}$.
\end{proof}

\begin{corollary}[Least prime as a nilpotency index]
Let
\[
D=B/\Ann_B(f).
\]
Then
\[
D\cong
\prod_{\substack{n<p\le2n\\p\text{ prime}}}\mathbb F_p[X]/(X^{N-p}),
\]
and
\[
\nilind_D(X)=N-\min\{p:n<p\le2n,\ p\text{ prime}\}.
\]
\end{corollary}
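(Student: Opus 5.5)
The plan is to work one CRT component at a time and then recombine, using the Full annihilator proposition. The decomposition $B\cong\prod_p \mathbb F_p[X]/(X^N)$ from \cref{thm:frobenius-monomial} is a product of rings, and $\Ann_B(f)$ is the product of the componentwise annihilators. Hence the quotient $D=B/\Ann_B(f)$ splits as the product of the componentwise quotients
\[
\bigl(\mathbb F_p[X]/(X^N)\bigr)\big/(X^{N-p}).
\]
Since $N-p\le N$, the third isomorphism theorem identifies each factor with $\mathbb F_p[X]/(X^{N-p})$. This gives the displayed isomorphism for $D$.

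For the nilpotency index, I will use that a power $X^d$ vanishes in a finite product exactly when it vanishes in every factor. In $\mathbb F_p[X]/(X^{N-p})$, the class $X^d$ is zero precisely when $d\ge N-p$. Note that $N-p\ge1$ because $p\le2n<N$, so the convention $d\ge1$ causes no trouble. Therefore $X^d=0$ in $D$ if and only if $d\ge\max_p(N-p)=N-\min p$. The minimum is over a nonempty set by Bertrand's theorem, which was already used in the least-prime corollary. It follows that $\nilind_D(X)=N-\min\{p:n<p\le2n\}$.

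The only point needing care is that forming the quotient commutes with the CRT product decomposition. This holds because the ideal $\Ann_B(f)$ is itself a product of ideals: the central idempotents of $B$ lie in $A$ and split every ideal. Beyond that, the argument is routine bookkeeping. As a consistency check, $\nilind_D(X)$ coincides with $\lambda_n$ from the annihilation corollary.
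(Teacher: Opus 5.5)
Your proposal is correct and follows the route the paper intends. The paper states this corollary without a separate proof, as an immediate consequence of the Full annihilator proposition: quotient each CRT factor $\mathbb F_p[X]/(X^N)$ by $(X^{N-p})$, then read off $\nilind_D(X)$ as $\max_p(N-p)$. Your additional checks are accurate: $N-p\ge1$, nonemptiness of the prime set via Bertrand, and agreement of $\nilind_D(X)$ with $\lambda_n$.
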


\begin{theorem}[Quotient by the prime-support element]
There is an isomorphism
\[
B/(f)
\cong
\prod_{\substack{n<p\le2n\\p\text{ prime}}}\mathbb F_p[X]/(X^p).
\]
In the factor indexed by $p$, the residue characteristic, residue-field cardinality, $\mathbb F_p$-dimension, and nilpotency index of $X$ are all equal to $p$.
\end{theorem}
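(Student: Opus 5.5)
The plan is to work componentwise through the CRT decomposition already used in \cref{thm:frobenius-monomial}. Since $B\cong\prod_p\mathbb F_p[X]/(X^N)$ with $N=2n+1$, and this is a product of rings, every principal ideal $(f)$ decomposes as the product of the principal ideals generated by the components of $f$. Therefore
\[
B/(f)\cong\prod_{\substack{n<p\le2n\\p\text{ prime}}}\bigl(\mathbb F_p[X]/(X^N)\bigr)\big/(f_p),
\]
where $f_p$ is the characteristic-$p$ component. The first step is to justify this decomposition, for example by using the idempotents $e_p$ of $A$. Each $e_p$ lies in $A\subseteq B$, so $(f)=\bigoplus_p e_p(f)=\bigoplus_p(e_pf)$.

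Next, I will apply \cref{thm:frobenius-monomial}, which gives $f_p=c_pX^p$ with $c_p=(\Rint{n}/p)\bmod p$ a nonzero element of $\mathbb F_p$, hence a unit. So $(f_p)=(X^p)$ in $\mathbb F_p[X]/(X^N)$. Because $p\le 2n<N$, the ideal $(X^p)$ contains $(X^N)$ in $\mathbb F_p[X]$. The third isomorphism theorem then gives
\[
\bigl(\mathbb F_p[X]/(X^N)\bigr)/(X^p)\cong\mathbb F_p[X]/(X^p).
\]
Assembling the factors yields the claimed isomorphism.

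For the second sentence, I read off invariants of the local ring $\mathbb F_p[X]/(X^p)$:
\begin{itemize}
\item Its unique maximal ideal is $(X)$, so the residue field is $\mathbb F_p$. This gives residue characteristic $p$ and residue-field cardinality $p$.
\item The classes $1,X,\dots,X^{p-1}$ form an $\mathbb F_p$-basis, so the dimension is $p$.
\item The nilpotency index of $X$ is $p$, since $X^{p-1}\ne0$ while $X^p=0$.
\end{itemize}

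I do not expect a real obstacle. The only point that needs care is the decomposition of a principal ideal in a finite product ring, together with confirming that the unit $c_p$ does not change the ideal. Both are routine. One should also note the case $p<N$, which holds because $p\le 2n$, since otherwise $(X^p)$ would be zero.
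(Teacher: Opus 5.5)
Your proposal is correct and follows the same route as the paper's proof. You work componentwise, note that the Frobenius monomial makes the ideal $(X^p)$, use $p<N$ to get $(\mathbb F_p[X]/(X^N))/(X^p)\cong\mathbb F_p[X]/(X^p)$, and read off the invariants from the basis $1,X,\ldots,X^{p-1}$. Your idempotent argument for splitting the principal ideal $(f)$ across the product only makes explicit a step the paper leaves implicit.
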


\begin{proof}
In characteristic $p$, the ideal generated by the component of $f$ is $(X^p)$. Since $p<N$,
\[
\bigl(\mathbb F_p[X]/(X^N)\bigr)/(X^p)
\cong
\mathbb F_p[X]/(X^p).
\]
The basis $1,X,\ldots,X^{p-1}$ gives the $\mathbb F_p$-dimension, and the powers of $X$ give the nilpotency index.
\end{proof}

\begin{corollary}[Jordan lengths]
Let $T$ denote multiplication by $X$.
\begin{enumerate}
\item On the characteristic-$p$ component of the principal ideal $Bf=(X^p)$, the map $T$ is one nilpotent Jordan block of size $N-p$.
\item On the characteristic-$p$ component of $B/(f)$, the map $T$ is one nilpotent Jordan block of size $p$.
\end{enumerate}
Thus each interval prime is recovered either as a Jordan-block length or as the complement of one; compare \cite{AltafiIarrobinoMacias}.
\end{corollary}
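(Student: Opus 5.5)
The proof reduces, through the CRT decomposition $B\cong\prod_p\mathbb F_p[X]/(X^N)$ used throughout \cref{sec:annihilator}, to a computation in one factor $\mathbb F_p[X]/(X^N)$. Multiplication by $X$ is $A$-linear and respects this product decomposition. It therefore suffices to describe $T$ on each characteristic-$p$ component of the two modules. Since these components are $\mathbb F_p$-vector spaces, the Jordan structure is determined factor by factor.

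\textbf{Part 1.} By \cref{thm:frobenius-monomial}, the $p$-component of $f$ is $uX^p$ with $u\in\mathbb F_p^\times$. Hence the $p$-component of $Bf$ is the ideal $(X^p)\subseteq\mathbb F_p[X]/(X^N)$. This ideal has $\mathbb F_p$-basis $X^p,X^{p+1},\ldots,X^{N-1}$, which has $N-p$ elements; note $N-p\ge1$ because $p\le 2n<N$. On this basis, $T$ sends $X^{j}\mapsto X^{j+1}$ for $j<N-1$ and sends $X^{N-1}\mapsto0$. This is a single cyclic chain, so $T$ is one nilpotent Jordan block of size $N-p$. Equivalently, $T^{N-p-1}X^p=X^{N-1}\ne0$ while $T^{N-p}=0$ on this module, so the nilpotency index equals the dimension, which forces a single block.

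\textbf{Part 2.} By the theorem on the quotient by the prime-support element, the $p$-component of $B/(f)$ is $\mathbb F_p[X]/(X^p)$. It has basis $1,X,\ldots,X^{p-1}$, and $T$ shifts this basis cyclically to zero. So again the nilpotency index $p$ equals the dimension $p$, giving one Jordan block of size $p$.

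The only point needing care is justifying "one block" rather than just "nilpotent of that size." This follows from the standard fact that a nilpotent operator whose nilpotency index equals the dimension is a single Jordan block, since it has a cyclic vector: here $X^p$ in Part 1 and $1$ in Part 2. The concluding remark follows directly: $p$ is read off as the block length in $B/(f)$, and as $N$ minus the block length in $Bf$.
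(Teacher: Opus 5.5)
Your proposal is correct and follows the paper's proof: you reduce to the factor $\mathbb F_p[X]/(X^N)$, use the bases $X^p,\ldots,X^{N-1}$ of $(X^p)$ and $1,\ldots,X^{p-1}$ of $\mathbb F_p[X]/(X^p)$, and observe that multiplication by $X$ shifts each basis forward to zero. Your cyclic-vector remark, that the nilpotency index equals the dimension, spells out why there is a single block, a step the paper leaves implicit.
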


\begin{proof}
The ideal $(X^p)$ has basis $X^p,X^{p+1},\ldots,X^{N-1}$, while $\mathbb F_p[X]/(X^p)$ has basis $1,X,\ldots,X^{p-1}$. Multiplication by $X$ shifts each basis forward until zero.
\end{proof}

\section{Extension to general intervals}
\label{sec:general-intervals}

Let $a,b$ be integers satisfying
\[
1\le a<b\le2a.
\]
Two variables are convenient because the lower and upper endpoints vary independently. When $a=n$ and $b=2n$, the formula below and the dyadic formula \eqref{eq:dyadic-modulus} produce the same prime product, although their $\gcd$ presentations are different.

Put $M=b!/a!$ and define
\begin{equation}
\label{eq:general-modulus}
\Rintab{a}{b}
:=
\frac{M}{\gcd\!\left(M,\,(a!)^M\right)}.
\end{equation}
An empty product of primes is understood to be $1$; correspondingly, $\mathbb Z/\mathbb Z$ and an empty product of field factors are interpreted as the zero ring.

\begin{theorem}[General interval modulus]
One has
\[
\Rintab{a}{b}
=
\prod_{\substack{a<p\le b\\p\text{ prime}}}p,
\]
and therefore
\[
\mathbb Z/\Rintab{a}{b}\mathbb Z
\cong
\prod_{\substack{a<p\le b\\p\text{ prime}}}\mathbb F_p.
\]
Moreover,
\[
\mathbb Z[1/a!]\big/\bigl(b!/a!\bigr)
\cong
\mathbb Z/\Rintab{a}{b}\mathbb Z.
\]
\end{theorem}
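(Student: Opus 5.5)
We compare $p$-adic valuations of $M=b!/a!=\prod_{a<m\le b}m$ and of $\gcd(M,(a!)^M)$ one prime at a time; this is the same strategy as in \cref{prop:central-binomial-modulus}. The goal is to show that the $\gcd$ removes the full $\ell$-primary part of $M$ for every prime $\ell\le a$, and removes nothing at primes in $(a,b]$.

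\textbf{Primes $p$ with $a<p\le b$.} Since $b\le2a<2p$, exactly one multiple of $p$ lies in $(a,b]$, namely $p$ itself, and $p^2>2a\ge b$. Hence $v_p(M)=1$. On the other hand $p\nmid a!$, so $v_p(\gcd(M,(a!)^M))=0$, and $p$ survives exactly once in $\Rintab{a}{b}$.

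\textbf{Primes $\ell\le a$.} Here $v_\ell(a!)\ge1$, so $v_\ell((a!)^M)=M\,v_\ell(a!)\ge M$. It therefore suffices to show $v_\ell(M)\le M$, which holds trivially because $\ell^{v_\ell(M)}\le M$ gives $v_\ell(M)\le\log_2 M<M$. With this bound the $\gcd$ contains the entire $\ell$-part of $M$, and the quotient has $\ell$-valuation $0$. Note that the exponent $M$ in $(a!)^M$ makes this step easy: unlike in \cref{prop:central-binomial-modulus}, no Legendre-formula estimate is needed.

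\textbf{Remaining primes and conclusion.} Primes $\ell>b$ divide neither $M$ nor $a!$. Combining the three cases, $\Rintab{a}{b}$ is exactly the squarefree product of the primes in $(a,b]$. When that set is empty the quotient is $1$, consistent with the stated convention. The CRT isomorphism then follows exactly as in the residue-field corollary.

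\textbf{Localized isomorphism.} We follow the proof of the localized provenance proposition. Every $m\in(a,b]$ that is not a prime in $(a,b]$ has all of its prime factors at most $a$. Indeed, if such an $m$ had a prime factor $q>a$, then $m=qk$ with $k\ge2$ (as $m$ is not itself a prime in $(a,b]$), so $m\ge2q>2a\ge b$, which is impossible. Hence $b!/a!=u\,\Rintab{a}{b}$ with $u$ a unit of $\mathbb Z[1/a!]$. Since $\gcd(a!,\Rintab{a}{b})=1$, inverting $a!$ in $\mathbb Z/\Rintab{a}{b}\mathbb Z$ changes nothing. In the empty case both sides are the zero ring, because $b!/a!$ is then a unit.

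The only place needing care is the hypothesis $b\le2a$, which is used twice: to guarantee $v_p(M)=1$ for interval primes, and to force composites in $(a,b]$ to have only small prime factors. Everything else is routine valuation bookkeeping.
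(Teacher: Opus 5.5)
Your proof is correct and follows essentially the same route as the paper: a prime-by-prime valuation comparison using $v_\ell\bigl((a!)^M\bigr)=M\,v_\ell(a!)\ge M\ge v_\ell(M)$ for $\ell\le a$, the bound $2p>2a\ge b$ to show each interval prime occurs exactly once in $M$, and the observation that everything removed by the $\gcd$ becomes a unit in $\mathbb Z[1/a!]$. Your explicit justification $v_\ell(M)\le\log_2 M<M$ and your separate treatment of the empty-interval case add detail but do not change the argument.
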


\begin{proof}
If $a<m\le b$ is composite, every prime divisor of $m$ is at most $a$: otherwise $m$ would be at least twice a prime larger than $a$, hence greater than $2a\ge b$. Let $q\le a$ be prime. If $q$ divides $M$, then $v_q(a!)\ge1$ and
\[
v_q\!\left((a!)^M\right)=M v_q(a!)\ge M\ge v_q(M).
\]
Thus the $\gcd$ in \eqref{eq:general-modulus} removes the full $q$-primary factor of $M$. If $p\in(a,b]$ is prime, then $p$ occurs once in $M$, and no second multiple occurs because $2p>2a\ge b$. Also $p\nmid a!$, so it survives the $\gcd$ once. This proves the product formula and the Chinese remainder decomposition.

After localization at $a!$, every removed factor is a unit, whereas none of the surviving interval primes is inverted. Thus $b!/a!$ and $\Rintab{a}{b}$ generate the same ideal in $\mathbb Z[1/a!]$, which gives the final isomorphism.
\end{proof}

For the rest of this section, write
\[
N=b+1,
\qquad
A=\mathbb Z/\Rintab{a}{b}\mathbb Z,
\qquad
B=A[X]/(X^N),
\qquad
f=(1+X)^{\Rintab{a}{b}}-1\in B.
\]

\begin{theorem}[General interval prime-support expansion]
In $B$,
\[
f
=
\sum_{\substack{a<p\le b\\p\text{ prime}}}
\frac{\Rintab{a}{b}}{p}X^p.
\]
The coefficient at $X^p$ has additive order $\ordplus(\Rintab{a}{b}/p)=p$ in $A$.
\end{theorem}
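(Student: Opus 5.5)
The plan mirrors the dyadic case: first compute $f$ one residue field at a time via the Chinese remainder theorem, then reassemble. By the General interval modulus theorem, $\Rintab{a}{b}$ is the squarefree product of the primes in $(a,b]$, so
\[
B\cong\prod_{\substack{a<p\le b\\p\text{ prime}}}\mathbb F_p[X]/(X^{N}).
\]
If the interval contains no prime, then $A$ and $B$ are the zero ring and both sides of the claimed identity vanish, so we assume at least one prime. For each such $p$, put $m_p=\Rintab{a}{b}/p$. Squarefreeness gives $p\nmid m_p$.

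In the characteristic-$p$ factor, Frobenius gives $(1+X)^p=1+X^p$. Hence
\[
(1+X)^{\Rintab{a}{b}}-1=(1+X^p)^{m_p}-1=\sum_{r=1}^{m_p}\binom{m_p}{r}X^{pr}.
\]
The key step is to show that every term with $r\ge2$ dies modulo $X^{N}$. This uses the hypothesis $b\le 2a$: since $p>a$, we get $2p\ge 2a+2\ge b+2>N$, so $X^{pr}=0$ for $r\ge2$. Also $p\le b<N$, so $X^p$ survives. The characteristic-$p$ component of $f$ is therefore $(m_p\bmod p)X^p$, with a nonzero coefficient. This is exactly the analogue of \cref{thm:frobenius-monomial}, with $N=b+1$ replacing $2n+1$. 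The one point needing care is that the degree cutoff is now $b+1$ rather than $2a+1$, and $b\le 2a$ is precisely what makes the estimate work.

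Next we reassemble. Consider the integer $m_q=\Rintab{a}{b}/q$ for an interval prime $q$. It is divisible by every interval prime $p\ne q$, so its $p$-component is $0$. Its $q$-component is $m_q\bmod q$. Consequently, the element $\sum_q m_qX^q\in B$ has, in each characteristic-$p$ factor, exactly the component $(m_p\bmod p)X^p$. Since this matches $f$ in every factor, the CRT isomorphism gives the displayed expansion.

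For the additive order, the order of $m_p$ in $\mathbb Z/\Rintab{a}{b}\mathbb Z$ is
\[
\frac{\Rintab{a}{b}}{\gcd\bigl(\Rintab{a}{b},m_p\bigr)}=\frac{\Rintab{a}{b}}{m_p}=p,
\]
because $m_p$ divides $\Rintab{a}{b}$. This is the same computation as in \cref{thm:prime-support}. I expect no real obstacle beyond checking the degree bound above.
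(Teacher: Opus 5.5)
Your proof is correct and follows the paper's argument essentially step for step. You apply Frobenius in each residue-field factor, use $p>a$ and $b\le 2a$ to kill every term with $r\ge2$ modulo $X^{b+1}$, reassemble via the CRT components of $\Rintab{a}{b}/p$, and compute the additive order as $\Rintab{a}{b}/\gcd(\Rintab{a}{b},m_p)=p$. Your explicit handling of the empty-interval case is a small addition that the paper leaves to its zero-ring convention.
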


\begin{proof}
In characteristic $p$, write $\Rintab{a}{b}=pm_p$. Frobenius gives
\[
(1+X)^{\Rintab{a}{b}}-1
=(1+X^p)^{m_p}-1.
\]
Because $p>a$ and $b\le2a$, one has $2p\ge b+1=N$, so every term after the first vanishes modulo $X^N$. The surviving coefficient is $m_p\bmod p$. The global coefficient $\Rintab{a}{b}/p$ has the same CRT components, and its additive order is
\[
\frac{\Rintab{a}{b}}
{\gcd\!\left(\Rintab{a}{b},\Rintab{a}{b}/p\right)}=p.
\]
\end{proof}

The component formulas for the scalar filtration, the full annihilator, $B/(f)$, and the layered expansion extend after replacing $n,2n,2n+1,\Rint{n}$ by $a,b,b+1,\Rintab{a}{b}$. Statements involving a least interval prime or its nilpotency index require that $(a,b]$ contain a prime. A successor-prime conclusion requires in addition that the least prime greater than $a$ be at most $b$.

\section{A layered regrading of the discarded terms}
\label{sec:layered-regrading}

Fix $n\ge1$, put $N=2n+1$, and write $A=\mathbb Z/\Rint{n}\mathbb Z$. The principal quotient $X^N=0$ discards the higher Frobenius monomials. For bookkeeping, consider
\[
\Wcal=A[X,Y]/(X^N-Y).
\]
Eliminating $Y$ identifies $\Wcal$ with $A[X]$; the point of the presentation is that every element has the unique regraded normal form
\[
\sum_{r=0}^{N-1}g_r(Y)X^r,
\qquad g_r(Y)\in A[Y],
\]
where the power of $Y$ records the quotient in Euclidean division of an exponent by $N$.

\begin{theorem}[Layered Frobenius expansion]
For a prime $p\in(n,2n]$, put $m_p=\Rint{n}/p$. In the characteristic-$p$ component of $\Wcal$,
\[
(1+X)^{\Rint{n}}-1
=
\sum_{k=1}^{m_p}
\binom{m_p}{k}
Y^{\lfloor pk/N\rfloor}X^{pk\bmod N}.
\]
Its component of $Y$-degree zero is
\[
\left(\frac{\Rint{n}}{p}\bmod p\right)X^p.
\]
\end{theorem}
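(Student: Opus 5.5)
The plan is to work one CRT component at a time, exactly as in \cref{thm:frobenius-monomial}. The only change is that nothing is truncated; the exponents are rewritten in the regraded normal form.

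\emph{Step 1: the component decomposition.} Since $\Wcal=A[X,Y]/(X^N-Y)$ and $A\cong\prod_p\mathbb F_p$, the characteristic-$p$ component of $\Wcal$ is $\mathbb F_p[X,Y]/(X^N-Y)$. In this ring $Y$ is identified with $X^N$, so it is isomorphic to $\mathbb F_p[X]$. Uniqueness of the normal form $\sum_{r<N}g_r(Y)X^r$ comes from Euclidean division of exponents by $N$: every $e\ge0$ is uniquely $e=N\lfloor e/N\rfloor+(e\bmod N)$, so $X^e=Y^{\lfloor e/N\rfloor}X^{e\bmod N}$, and these monomials form an $\mathbb F_p$-basis.

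\emph{Step 2: Frobenius.} In characteristic $p$ we have $(1+X)^p=1+X^p$. Writing $\Rint{n}=pm_p$, this gives
\[
(1+X)^{\Rint{n}}-1=(1+X^p)^{m_p}-1=\sum_{k=1}^{m_p}\binom{m_p}{k}X^{pk}.
\]
This identity holds in $\mathbb F_p[X]$ with no truncation. Now rewrite each $X^{pk}$ as $Y^{\lfloor pk/N\rfloor}X^{pk\bmod N}$ using Step 1. This is the displayed formula. The binomial coefficients are read mod $p$, and some of them may vanish, which does not affect the identity.

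\emph{Step 3: the $Y$-degree-zero part.} The $k$-th term has $Y$-degree $\lfloor pk/N\rfloor$, which is $0$ exactly when $pk<N=2n+1$. Since $p\ge n+1$, the case $k=1$ satisfies $p\le 2n<N$. For $k\ge2$ we get $pk\ge2n+2>N$. So only $k=1$ survives in $Y$-degree zero, contributing $m_p X^p$. Finally, $m_p\bmod p$ is nonzero by \cref{prop:central-binomial-modulus}.

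\emph{Main subtlety.} There is no real obstacle. The one step that needs care is Step 1: checking that the normal form is unique, so that "component of $Y$-degree zero" is well defined. The point is that distinct $k$ give distinct exponents $pk$, and Euclidean division by $N$ sends distinct exponents to distinct basis monomials $Y^{q}X^{r}$. Therefore no collisions or cancellations occur between the terms of the sum.
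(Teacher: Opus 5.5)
Your proposal is correct and follows the paper's proof. You apply Frobenius in the characteristic-$p$ component to get $\sum_{k=1}^{m_p}\binom{m_p}{k}X^{pk}$, rewrite each exponent by Euclidean division by $N$ using $X^N=Y$, and note that $pk<N$ holds only for $k=1$ because $p>N/2$. Your explicit check that the regraded normal form is unique, so that the $Y$-degree-zero component is well defined, is a detail the paper only asserts when it introduces the layered ring.
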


\begin{proof}
In characteristic $p$, Frobenius gives
\[
(1+X)^{\Rint{n}}-1
=\sum_{k=1}^{m_p}\binom{m_p}{k}X^{pk}.
\]
Write $pk=N\lfloor pk/N\rfloor+(pk\bmod N)$ and use $X^N=Y$. Since $p>N/2$, the inequality $pk<N$ holds for positive $k$ exactly when $k=1$.
\end{proof}

For $L\ge1$, put
\[
\Wcal^{(L)}=A[X,Y]/(X^N-Y,Y^L).
\]

\begin{proposition}[Finite layered normal form]
\label{prop:finite-layers}
There is an isomorphism
\[
\Wcal^{(L)}\cong A[X]/(X^{NL}).
\]
Every element has a unique normal form
\[
\sum_{j=0}^{L-1}\sum_{r=0}^{N-1}c_{j,r}Y^jX^r.
\]
For a prime $p\in(n,2n]$ and $m_p=\Rint{n}/p$, the characteristic-$p$ component is
\[
(1+X)^{\Rint{n}}-1
=
\sum_{\substack{1\le k\le m_p\\pk<NL}}
\binom{m_p}{k}
Y^{\lfloor pk/N\rfloor}X^{pk\bmod N}.
\]
\end{proposition}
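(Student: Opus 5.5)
The plan is to eliminate $Y$ first, then read off the normal form, and finally specialize the Frobenius computation of the previous theorem.

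For the isomorphism, consider the $A$-algebra map $\phi\colon A[X,Y]\to A[X]/(X^{NL})$ sending $X\mapsto X$ and $Y\mapsto X^N$. It is surjective, since $X$ is in the image. It kills $X^N-Y$, and it kills $Y^L$ because $Y^L\mapsto X^{NL}=0$. So it descends to a surjection $\bar\phi\colon\Wcal^{(L)}\to A[X]/(X^{NL})$. For the inverse, use the map $A[X]\to\Wcal^{(L)}$ with $X\mapsto X$. It sends $X^{NL}$ to $(X^N)^L=Y^L=0$, so it factors through $A[X]/(X^{NL})$. The two composites fix $X$, and in $\Wcal^{(L)}$ one has $Y=X^N$, so both composites are the identity on generators. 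Hence $\bar\phi$ is an isomorphism. Equivalently, $\Wcal^{(L)}\cong A[X]/(X^N-Y)\otimes\ldots$, which collapses to $A[X]/((X^N)^L)$.

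For the normal form, use that $A[X]/(X^{NL})$ is a free $A$-module with basis $X^e$, $0\le e<NL$. Euclidean division $e=Nj+r$ with $0\le r<N$ gives a bijection between these $e$ and pairs $(j,r)$ with $0\le j<L$. Under $\bar\phi$ the element $Y^jX^r$ corresponds to $X^{Nj+r}$. So the monomials $Y^jX^r$ form an $A$-basis of $\Wcal^{(L)}$, which gives existence and uniqueness of the coefficients $c_{j,r}$.

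For the component formula, pass to the characteristic-$p$ CRT factor $\mathbb F_p[X,Y]/(X^N-Y,Y^L)\cong\mathbb F_p[X]/(X^{NL})$. As in the layered Frobenius expansion, $(1+X)^{\Rint{n}}-1=\sum_{k=1}^{m_p}\binom{m_p}{k}X^{pk}$. The monomials with $pk\ge NL$ vanish. Each surviving $X^{pk}$ is rewritten as $Y^{\lfloor pk/N\rfloor}X^{pk\bmod N}$, which is exactly the normal form from the previous step, and $\lfloor pk/N\rfloor<L$ holds precisely when $pk<NL$.

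The only point needing care is checking that the CRT decomposition of $A$ is compatible with the presentation. This holds because tensoring the finite presentation with each $\mathbb F_p$ commutes with the quotient, so I expect no real obstacle.
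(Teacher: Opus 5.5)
Your proposal is correct and follows the paper's route. The paper's proof simply notes that $Y=X^N$ turns $Y^L=0$ into $X^{NL}=0$ and says the normal form and the truncated Frobenius expansion follow; you carry out exactly that with explicit mutually inverse maps, the Euclidean-division basis correspondence $Y^jX^r\leftrightarrow X^{Nj+r}$, and the equivalence $pk<NL\iff\lfloor pk/N\rfloor<L$ (only the garbled aside ``$\Wcal^{(L)}\cong A[X]/(X^N-Y)\otimes\ldots$'' is malformed, and it is unnecessary and should be deleted).
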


\begin{proof}
The relation $Y=X^N$ turns $Y^L=0$ into $X^{NL}=0$. The normal form and the truncated expansion follow.
\end{proof}

The choice $L=1$ recovers the principal truncation, and $L=2$ retains exactly the terms with $pk<2N$. In every case the component of $Y$-degree zero is the prime-support expansion of \cref{thm:prime-support}.

\section{Computational realization}
\label{sec:computation}

The structural proof decomposes the coefficient ring into prime fields, but a computer algebra implementation need not know those factors. The construction can be evaluated directly over the composite coefficient ring by exact integer arithmetic and modular polynomial exponentiation. A complete SageMath listing appears in \cref{app:sage-code}; worked outputs appear in \cref{sec:worked-examples}. SageMath is a natural environment here because it provides exact integers, residue rings, polynomial rings, and quotient rings in one coercion system \cite{SageMath,SagePolynomialQuotients}.

\subsection{Direct quotient-ring computation}

For $n\ge1$, put
\[
C=\binom{2n}{n},
\qquad
N=2n+1.
\]

\begin{proposition}[Direct CAS computation]
\label{prop:direct-CAS}
After evaluating $\Rint{n}$ by \cref{prop:central-binomial-modulus}, use binary exponentiation in the quotient ring to compute
\[
f=(1+X)^{\Rint{n}}-1
\quad\text{in}\quad
(\mathbb Z/\Rint{n}\mathbb Z)[X]/(X^{2n+1}).
\]
The primes in $(n,2n]$ are exactly the support exponents of $f$. Neither a factorization of $\Rint{n}$ nor an explicit enumeration of smaller primes is an input to this exponentiation.
\end{proposition}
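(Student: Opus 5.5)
The statement has three separable claims, and I would prove them in that order. First, the procedure is well defined and computes the object it names. Second, its output support is exactly the set of interval primes. Third, the information-flow claim about inputs holds.

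\textbf{Well-definedness.} The integer $\Rint{n}$ is obtained from $n$ by integer operations only: computing $C=\binom{2n}{n}$ and $(n!)^2$, one $\gcd$, and one exact division. \Cref{prop:central-binomial-modulus} certifies that this equals the interval product, but the computation itself never uses that description. The ring $(\mathbb Z/\Rint{n}\mathbb Z)[X]/(X^{2n+1})$ is presented with the explicit basis $1,X,\ldots,X^{2n}$. Multiplication there is ordinary polynomial multiplication, truncated in degree and reduced mod $\Rint{n}$. Binary exponentiation then computes $(1+X)^{\Rint{n}}$ correctly in any commutative ring, using $O(\log \Rint{n})$ multiplications, since it only uses associativity of multiplication and the binary expansion of the exponent. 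Subtracting $1$ gives the canonical representative of $f$.

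\textbf{Correctness of the support.} This is immediate from \cref{thm:prime-support}. The representative computed is the unique one of degree at most $2n$, and the theorem says its nonzero coefficients sit exactly at the primes $p\in(n,2n]$, with coefficient $\Rint{n}/p\ne0$ in $A$. So reading off the support exponents of the computed remainder returns exactly the interval primes. I would stress one point here: the proof of \cref{thm:prime-support} passes through the CRT decomposition, but the conclusion is an identity in $B$ itself. So it applies to the computation carried out over the composite ring.

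\textbf{Independence from factorization and prime lists.} This is an inspection of the algorithm. Its inputs are $n$, the integer $\Rint{n}$ produced above, and the ring operations of $\mathbb Z/\Rint{n}\mathbb Z$. None of these steps calls a factorization routine, a primality test, or a list of primes below $n$; factorials and binomials are plain products of integers.

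The main obstacle is making this last claim precise rather than rhetorical, since "not an input" is a statement about the algorithm, not a mathematical theorem. I would handle it by explicitly listing the primitive operations used: integer multiplication, division, $\gcd$, and modular reduction. Then I would note that the decomposition used in \cref{thm:frobenius-monomial} appears only in the correctness proof and never in the execution.
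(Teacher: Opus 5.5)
Your proposal is correct and matches the paper's reasoning. The paper gives no separate proof for this proposition; it rests on the same two ingredients you use: the support claim is the identity of \cref{thm:prime-support} read in the composite ring $B$ itself, and the non-input clause is an inspection of the procedure, with the CRT factorization serving only the correctness argument (the paper calls it ``explanatory and diagnostic rather than part of the direct computation'').
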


For displays and comparisons with Boolean prime tables, define in this subsection the \textbf{interval prime-indicator polynomial}
\[
\mathsf S_n(X)
=\sum_{r=n+1}^{2n}\mathbf 1_{\{[X^r]f\ne0\}}X^r
\in\mathbb F_2[X].
\]
This polynomial records only the support of $f$; it is not obtained by reducing the coefficients of $f$ modulo $2$. For $n\ge2$, the element $2$ is a unit in $\mathbb Z/\Rint{n}\mathbb Z$, so no coefficient homomorphism to $\mathbb F_2$ is involved.

Binary exponentiation uses fewer than $2\log_2\Rint{n}+2$ truncated polynomial multiplications. With schoolbook multiplication, the total is $O(N^2\log\Rint{n})$ coefficient operations; faster polynomial multiplication may be substituted directly. This counts coefficient operations; a full bit-complexity analysis must also account for the growing coefficient modulus, coefficient storage, and construction of the modulus.

The factorial residue needed in the $\gcd$ can be computed recursively from central binomial coefficients without forming $n!$ as an independent large integer; see \cref{sec:doubling-factorial}.

\subsection{Comparison with classical prime sieves}
\label{sec:sieve-comparison}

The sieve of Eratosthenes and its segmented implementations mark candidate integers by repeatedly crossing out multiples of primes up to the square root of the upper endpoint. Segmentation makes this practical for long intervals while keeping only a bounded block in memory \cite{BaysHudson}. Linear sieves refine the organization of the marking process \cite{Pritchard}; the Atkin--Bernstein sieve instead enumerates representations by selected binary quadratic forms and then removes square multiples \cite{AtkinBernstein}.

The present construction addresses a related output problem by a different mechanism. The central-binomial coefficient, modular factorial recurrence, and $\gcd$ together compute $\Rint{n}$ without explicitly enumerating its prime factors. Truncated polynomial exponentiation then uses Frobenius to place each surviving characteristic in its numerical monomial slot. The Boolean polynomial $\mathsf S_n$ can be compared with the output table of a segmented sieve, while the residue coefficients retain algebraic information absent from a Boolean table.

For rapid enumeration, classical and segmented sieves benefit from small machine-integer arithmetic and mature cache-aware implementations. The present construction serves a different computational purpose: it realizes interval prime support through exact arithmetic in a composite quotient ring and retains coefficient-level algebraic data. The displayed source performs no explicit prime enumeration and uses no prime generator.

There are two useful computational modes. The direct mode works in the composite coefficient ring and requires no factorization. If $\Rint{n}$ is subsequently factored, the CRT mode computes the field components independently and makes the Frobenius mechanism transparent. Because the factors of $\Rint{n}$ are the desired primes, that decomposition is explanatory and diagnostic rather than part of the direct computation.

\section{Worked computations}
\label{sec:worked-examples}

This section gives boundary checks, a large Boolean support display, a coefficient-level example for the direct procedure of \cref{prop:direct-CAS}, and a two-layer expansion.

\subsection{Boundary and empty-interval checks}

For $n=1$, one has $\Rint{1}=2$ and
\[
(1+X)^2-1=X^2
\quad\text{in}\quad
\mathbb F_2[X]/(X^3),
\]
so the construction includes the endpoint prime $2$. The input need not be prime: for $n=8$, one has $\Rint{8}=143$ and
\[
(1+X)^{143}-1=13X^{11}+11X^{13}
\quad\text{in}\quad
(\mathbb Z/143\mathbb Z)[X]/(X^{17}).
\]

For the general-interval construction of \cref{sec:general-intervals}, the integer $M=10!/8!=90$ for $(8,10]$ is removed completely by the $\gcd$, so $\Rintab{8}{10}=1$ and the coefficient ring is the zero ring. For the endpoint-prime interval $(8,11]$, one instead obtains $\Rintab{8}{11}=11$ and
\[
(1+X)^{11}-1=X^{11}
\quad\text{in}\quad
\mathbb F_{11}[X]/(X^{12}).
\]

\subsection{A large support display: the interval \texorpdfstring{$(100,200]$}{(100,200]}}

For $n=100$, the modulus is
\[
\Rint{100}=3{,}383{,}080{,}509{,}296{,}917{,}481{,}189{,}798{,}760{,}796{,}480{,}670{,}771{,}162{,}183.
\]
The corresponding residue polynomial has twenty-one nonzero coefficients. Replacing each nonzero coefficient by $1$ gives
\begin{align*}
\mathsf S_{100}(X)={}&X^{101}+X^{103}+X^{107}+X^{109}+X^{113}+X^{127}+X^{131}\\
&+X^{137}+X^{139}+X^{149}+X^{151}+X^{157}+X^{163}+X^{167}\\
&+X^{173}+X^{179}+X^{181}+X^{191}+X^{193}+X^{197}+X^{199}
\qquad\text{in }\mathbb F_2[X].
\end{align*}
The exponents are the complete set of primes in $(100,200]$. The passage to $\mathsf S_{100}$ is only a Boolean recoding of support; it is not coefficient reduction modulo $2$.

\subsection{A coefficient-level computation: the interval \texorpdfstring{$(7,14]$}{(7,14]}}

Here
\[
\binom{14}{7}=3432,
\qquad
\gcd\!\left(3432,(7!)^2\right)=24,
\qquad
\Rint{7}=143.
\]
In
\[
B=(\mathbb Z/143\mathbb Z)[X]/(X^{15}),
\]
one obtains
\[
f=(1+X)^{143}-1=13X^{11}+11X^{13}.
\]
Thus the nonzero coefficient positions return $\{11,13\}$. The coefficient slots from $8$ through $14$ are
\[
\begin{array}{c|ccccccc}
r&8&9&10&11&12&13&14\\
\hline
[X^r]f&0&0&0&13&0&11&0\\
{}[X^r]\mathsf S_7&0&0&0&1&0&1&0
\end{array}
\]
and
\[
[X^{11}]f=\frac{\Rint{7}}{11}=13,
\qquad
[X^{13}]f=\frac{\Rint{7}}{13}=11.
\]
Their additive orders recover the corresponding primes:
\[
\ordplus(13\bmod143)=11,
\qquad
\ordplus(11\bmod143)=13.
\]
Because the input $7$ is prime, the least support exponent is the next prime. The annihilator form gives the same answer:
\[
X^4f=0,
\qquad
X^3f\ne0,
\qquad
15-4=11.
\]
The factorization $143=11\cdot13$ explains the decomposition $\mathbb Z/143\mathbb Z\cong\mathbb F_{11}\times\mathbb F_{13}$ after the direct computation; it is not required to produce $f$.

\subsection{A two-layer expansion}

For the interval $(7,14]$, take $N=15$ and impose $Y^2=0$ in the finite layered algebra of \cref{prop:finite-layers}. The two residue-field components are
\[
f_{11}=2X^{11}+YX^7,
\qquad
f_{13}=11X^{13}+3YX^{11}.
\]
Reassembling the components modulo $143$ gives
\[
f=13X^{11}+11X^{13}+Y\bigl(78X^7+55X^{11}\bigr).
\]
The constant layer is the principal prime-support expansion, while the coefficient of $Y$ records the second Frobenius terms discarded by the truncation $X^{15}=0$.

\section{Conclusion}

The construction answers the motivating interval-recovery problem: from the lower endpoint $n$ alone, and with no prior information about the primes above it, it recovers every prime in the dyadic interval $(n,2n]$. The first, arithmetic stage forms
\[
\Rint{n}
:=
\frac{\binom{2n}{n}}
{\gcd\!\left(\binom{2n}{n},(n!)^2\right)}
=
\prod_{\substack{n<p\le2n\\p\ \mathrm{prime}}}p.
\]
The left-hand expression depends only on $n$, while the product identity shows what the $\gcd$ quotient accomplishes: it removes the full primary contribution of every prime at most $n$ and leaves exactly the squarefree product of the primes in $(n,2n]$. The second, algebraic stage uses this same integer as both coefficient modulus and exponent in one modular exponentiation:
\[
f_n
:=
(1+X)^{\Rint{n}}-1
=
\sum_{\substack{n<p\le2n\\p\ \mathrm{prime}}}
\frac{\Rint{n}}{p}X^p
\quad\text{in } (\mathbb Z/\Rint{n}\mathbb Z)[X]/(X^{2n+1}).
\]
Fiberwise Frobenius is the mechanism behind this remainder: every interval prime $p$ appears at monomial degree $p$, while its coefficient $\Rint{n}/p$ has additive order $p$.

The same primes reappear intrinsically throughout the resulting algebra. They are the support degrees of $f_n$, the jumps in its annihilator filtration, the residue characteristics and dimensions of its quotient factors, and the lengths of its nilpotent Jordan blocks. The least support degree is the least prime greater than $n$; when $n=p_k$, it is $p_{k+1}$, recovered without the index $k$, a prime table, or candidate-wise primality tests.

The main significance is the exact algebraic encoding: the construction gives a uniform passage from a prime-free integer expression to simultaneous prime support and local ring data. Its extension to intervals $a<b\le2a$ shows that the dyadic case belongs to a broader factor-of-two phenomenon, while the layered regrading records the Frobenius terms discarded by the principal truncation. In short, one endpoint determines one modulus, one finite algebra, and one distinguished element whose algebraic signatures are exactly the primes in its interval.

\begin{appendices}
\crefalias{section}{appendix}

\section{Computing the factorial residue by doubling}
\label{sec:doubling-factorial}

Formula \eqref{eq:dyadic-modulus} appears to require a separate construction of $n!$. For the $\gcd$, however, only the residue of $n!$ modulo $C=\binom{2n}{n}$ is needed. This residue can be computed recursively from central binomial coefficients without forming $n!$ as an independent large integer.

For $r\ge0$, write $C_r=\binom{2r}{r}$. The \textbf{swinging factorial} is
\[
\operatorname{sf}(m)
=
\begin{cases}
C_{m/2},&m\text{ even},\\
mC_{(m-1)/2},&m\text{ odd}.
\end{cases}
\]
Then
\begin{equation}
m!
=\left(\left\lfloor\frac m2\right\rfloor!\right)^2\operatorname{sf}(m).
\end{equation}
This is the swinging-factorial recurrence \cite{OEISSwing,BrentZimmermann}; it follows by writing $m=2r$ or $m=2r+1$ and using
\[
(2r)!=(r!)^2\binom{2r}{r}.
\]

For a fixed modulus $M$, define recursively
\[
f_M(0)=f_M(1)=1,
\qquad
f_M(m)\equiv f_M(\lfloor m/2\rfloor)^2\operatorname{sf}(m)\pmod M.
\]
Induction gives $f_M(m)\equiv m!\pmod M$. Taking $M=C$ therefore permits the computation
\begin{equation}
\Rint{n}
=
\frac{C}{\gcd\!\left(C,\,f_C(n)^2\bmod C\right)}.
\end{equation}
The recursion has logarithmic depth and reduces after each multiplication. It avoids storing a separately constructed value of $n!$ by using central binomial coefficients at the halved recursion arguments. The SageMath implementation in \cref{app:sage-code} uses this modular form; its practical cost is governed by the multiprecision integer and binomial routines used by the CAS \cite{BrentZimmermann}.

\section{SageMath implementation}
\label{app:sage-code}

The following SageMath program implements the direct construction in the composite quotient ring. The factorial residue used in the $\gcd$ is evaluated by the doubling recurrence of \cref{sec:doubling-factorial}; the program never forms $n!$ as a separate integer. SageMath's quotient-ring syntax and modular polynomial powers are documented in \cite{SageMath,SagePolynomialQuotients}. The reporting functions print the modulus, polynomial remainder, support, and coefficient additive orders directly, without calling a prime generator. The final loop displays these outputs for every $1\le n\le50$.

\begin{verbatim}
def swing_mod(m, modulus):
    """Return sf(m) modulo modulus."""
    if m <= 1:
        return Integers(modulus)(1)

    r = m // 2
    value = Integers(modulus)(binomial(2*r, r))
    if m % 2 == 1:
        value *= m
    return value


def factorial_mod_doubling(m, modulus):
    """Return m! modulo modulus using the doubling recurrence."""
    Zm = Integers(modulus)
    if m <= 1:
        return Zm(1)

    half = factorial_mod_doubling(m // 2, modulus)
    return half^2 * swing_mod(m, modulus)


def prime_interval_modulus(n):
    """Return R_n without forming n! as a separate integer."""
    if n < 1:
        raise ValueError("n must be positive")

    C = ZZ(binomial(2*n, n))
    factorial_residue = ZZ(factorial_mod_doubling(n, C))
    square_residue = (factorial_residue^2) % C
    return C // gcd(C, square_residue)


def coefficient_data(modulus, representative, support):
    """Return (degree, coefficient, additive order) for each support term."""
    data = []
    for degree in support:
        coefficient = ZZ(representative[degree])
        additive_order = modulus // gcd(modulus, coefficient)
        data.append((degree, coefficient, additive_order))
    return data


def prime_support_element(n):
    """Compute R_n, the residue element, its support, and its Boolean recoding."""
    Rn = prime_interval_modulus(n)

    A = Integers(Rn)
    P = PolynomialRing(A, 'x')
    x = P.gen()
    B = P.quotient(x^(2*n + 1), names='X')
    X = B.gen()

    f = (1 + X)^Rn - 1
    representative = f.lift()
    support = [r for r in range(0, 2*n + 1)
               if representative[r] != 0]

    P2 = PolynomialRing(GF(2), 'z')
    z = P2.gen()
    indicator = sum(z^r for r in support)
    data = coefficient_data(Rn, representative, support)

    return Rn, f, support, indicator, data


def print_prime_support(n):
    """Print the complete output of the dyadic construction."""
    Rn, f, support, indicator, data = prime_support_element(n)
    print(f"n = {n}")
    print(f"R_n = {Rn}")
    print(f"f_n = {f}")
    print(f"support = {support}")
    print(f"S_n = {indicator}")
    print("coefficient data (degree, coefficient, additive order):")
    for row in data:
        print(f"  {row}")


def short_interval_modulus(a, b):
    """Return R_{a,b} for 1 <= a < b <= 2*a."""
    if not (1 <= a < b <= 2*a):
        raise ValueError("require 1 <= a < b <= 2*a")

    M = prod(ZZ(r) for r in range(a + 1, b + 1))
    small_part = power_mod(factorial(a) % M, M, M)
    return M // gcd(M, small_part)


def short_interval_support_element(a, b):
    """Compute the support element for a short interval."""
    Rab = short_interval_modulus(a, b)
    if Rab == 1:
        return Rab, 0, [], []

    A = Integers(Rab)
    P = PolynomialRing(A, 'x')
    x = P.gen()
    B = P.quotient(x^(b + 1), names='X')
    X = B.gen()
    f = (1 + X)^Rab - 1
    representative = f.lift()
    support = [r for r in range(0, b + 1)
               if representative[r] != 0]
    data = coefficient_data(Rab, representative, support)

    return Rab, f, support, data


def print_short_interval(a, b):
    """Print the complete output for a short interval."""
    Rab, f, support, data = short_interval_support_element(a, b)
    print(f"(a, b] = ({a}, {b}]")
    print(f"R_(a,b) = {Rab}")
    print(f"f_(a,b) = {f}")
    print(f"support = {support}")
    print("coefficient data (degree, coefficient, additive order):")
    for row in data:
        print(f"  {row}")


for n in range(1, 51):
    print_prime_support(n)
    print()
\end{verbatim}

The listing and all displayed examples were run with SageMath 10.9. The recursive routine has depth $O(\log n)$ and reduces every product modulo $\binom{2n}{n}$. The listing prioritizes transparency. For large-scale experiments, its polynomial arithmetic can be replaced by a lower-level truncated multiplication routine without changing the mathematical algorithm.

\end{appendices}

\end{document}